\theoremstyle{plain}
\newtheorem{thm}[subsection]{Theorem}
\newtheorem{lem}[subsection]{Lemma}
\newtheorem{prop}[subsection]{Proposition}
\newtheorem{cor}[subsection]{Corollary}
\newtheorem{claim}[subsection]{Claim}
\theoremstyle{definition}
\newtheorem{rk}[subsection]{Remark}
\newcommand{\ov}{\overline}
\newcommand{\bb}{\mathbb}
\newcommand{\coker}{\text{Coker}}
\newcommand{\p}{\partial}
\newcommand{\Mod}[1]{\,\text{\rm mod}\,#1}
\newcommand{\Hom}{\text{\rm Hom}}
\begin{document}
	\date{}
		
	\title[Deformations of Nodal Hypersurfaces]{On Deformations of Nodal Hypersurfaces }
		
	\author[ZHENJIAN WANG]{ ZHENJIAN WANG  }
	\address{Univ. C$\hat{\rm o}$te d'Azur, CNRS,  LJAD, UMR 7351, 06100 Nice, France.}
	\email{wzhj01@gmail.com}
		
	\subjclass[2010]{Primary 32S35, Secondary 14C30, 14D07, 32S25 }
		
	\keywords{Nodal hypersurfaces, Deformations, Torelli theorem}
		
	\begin{abstract}
	We extend the infinitesimal Torelli theorem for smooth hypersurfaces to nodal hypersurfaces.
	\end{abstract}
	\maketitle

\section{Introduction}
Deformations of smooth hypersurfaces provide examples of great interest and importance in the theory of variation of Hodge structures, especially because of the generic Torelli theorem, see \cite{VO2}, Chapter 6. In a recent thesis \cite{Zhao}, Y. Zhao considers  deformations of nodal surfaces in the $3$-dimensional complex projective space $\bb{P}^3$ and shows that the infinitesimal Torelli theorem still holds.

Let $S=\bb{C}[x_0,\cdots, x_n]=\bigoplus_{d=0}^\infty S_d$ be the graded ring of polynomials and let $f\in S_d$ be a homogeneous polynomial of degree $d$. Denote by $X_f: f=0$ the hypersurface in $\bb{P}^n$ defined by $f$. Moreover, let
$$
J(f)=(\frac{\p f}{\p x_0},\cdots, \frac{\p f}{\p x_n})
$$
 be the graded ideal generated by the first derivatives of $f$, also called the Jacobian ideal of $f$. We consider the following map
\begin{equation}\label{eq: phi}
\varphi:\quad (S/J(f))_d\to \Hom((S/J(f))_{d-n-1}, (S/J(f))_{2d-n-1}),\qquad [P]\mapsto([Q]\mapsto [PQ]).
\end{equation}
As a matter of fact, Y. Zhao \cite{Zhao} proves the infinitesimal Torelli theorem by showing that the map $\varphi$
is injective when $n=3$ and $X_f$ is a {\bf nodal} surface. This result can be extended to higher dimensional cases.

\begin{thm}\label{main thm}
Assume $n\geq 3$ is an integer and $d\geq n+1$. Let $f\in S$ be a homogeneous polynomial of degree $d$ such that $X_f: f=0$ is a nodal hypersurface in $\bb{P}^n$. Then the map $\varphi$ is injective.
\end{thm}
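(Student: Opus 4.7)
The plan is to adapt the classical Macaulay--Donagi duality argument that establishes the injectivity of $\varphi$ for smooth hypersurfaces to the nodal setting, carefully tracking the extra contribution of the nodes. Specifically, I would suppose that $[P]\in(S/J(f))_d$ satisfies $\varphi([P])=0$, so that $P\cdot S_{d-n-1}\subseteq J(f)_{2d-n-1}$, and aim to conclude $[P]\in J(f)_d$.

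\textbf{Smooth template.} When $X_f$ is smooth, $J(f)$ is generated by a regular sequence of $n+1$ polynomials of degree $d-1$, and the Jacobian ring $R:=S/J(f)$ is Artinian Gorenstein with socle in degree $T=(n+1)(d-2)$; Macaulay's theorem then gives a perfect pairing $R_a\times R_{T-a}\to R_T\cong\bb{C}$. Combined with the surjectivity of the multiplication
\[
R_{d-n-1}\otimes R_{(n-1)d-n-1}\twoheadrightarrow R_{T-d},
\]
which follows from the explicit Koszul resolution of $R$, the hypothesis $\varphi([P])=0$ forces $[P]\cdot R_{T-d}=0$, whence $[P]=0$ by perfect pairing. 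My task is to produce a nodal analogue of this chain of implications.

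\textbf{Nodal correction.} For nodal $X_f$, the ring $R$ is no longer Gorenstein; the failure is concentrated at the node set $\Sigma\subset X_f$. I would introduce a saturated Jacobian ideal $\widetilde{J(f)}\supseteq J(f)$ (following the Dimca--Sticlaru theory of graded Milnor algebras of hypersurfaces with isolated singularities) so that $\widetilde{R}=S/\widetilde{J(f)}$ is Artinian Gorenstein with an explicitly computable socle degree, and so that the Macaulay pairing together with the multiplication surjectivity above both survive in $\widetilde{R}$. Since $J(f)\subseteq\widetilde{J(f)}$, the hypothesis $\varphi([P])=0$ transports to $\widetilde{R}$, and the smooth template, applied inside $\widetilde{R}$, forces the image of $[P]$ in $\widetilde{R}_d$ to vanish. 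To descend back to $R_d$, one must show that the natural map $R_d\to\widetilde{R}_d$ is injective; this is where the hypothesis $d\geq n+1$ enters, ensuring that the defect $\widetilde{J(f)}/J(f)$ vanishes in degree $d$ by a Cayley--Bacharach-type count of conditions imposed by $\Sigma$ on polynomials of the complementary degree $T-d$.

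\textbf{Main obstacle.} The hardest step is the nodal correction: establishing the Gorenstein property and the correct socle degree of $\widetilde{R}$ for general $n\geq 3$ (thus generalizing Y.~Zhao's surface argument), and upgrading the Koszul-based multiplication surjectivity of the smooth case to $\widetilde{R}$, whose minimal free resolution is not of Koszul shape. The key input is a careful Cayley--Bacharach analysis of the linear conditions imposed by the nodes on polynomials of the intermediate degrees $d-n-1$, $d$, and $2d-n-1$, together with standard tools from graded commutative algebra.
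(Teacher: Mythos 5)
There is a genuine gap: the entire argument is routed through the ``nodal correction'' step, and that step is both unproved and, as formulated, false. For a nodal hypersurface the zero set $V(J(f))\subseteq\bb{P}^n$ is the nonempty finite set of nodes, so its saturation $\widetilde{J(f)}=I(f)$ cuts out a zero-dimensional projective scheme and $\widetilde{R}=S/\widetilde{J(f)}$ has Krull dimension one, with $\dim\widetilde{R}_k$ equal to the number of nodes for all large $k$; it is therefore not Artinian and cannot be Artinian Gorenstein with a socle degree. The duality that does survive in this setting (van Straten, Dimca--Saito, Dimca--Sticlaru) is a self-duality of the graded \emph{defect module} $N(f)=I(f)/J(f)$, namely $N(f)_k\cong N(f)_{T-k}^\vee$ with $T=(n+1)(d-2)$, not a Macaulay pairing on a quotient ring, and extracting the injectivity of $\varphi$ from it would still require the surjectivity of a multiplication map in the nodal Jacobian ring --- which is precisely the symmetrizer-type statement the paper explicitly records as open for nodal hypersurfaces. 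The ``Cayley--Bacharach count'' needed for your descent step $R_d\hookrightarrow\widetilde{R}_d$ is likewise only gestured at. So what you have is a plausible research plan whose hardest links are missing, not a proof.

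For contrast, the paper's actual argument avoids duality entirely and is elementary. The key input is Dimca's syzygy vanishing for nodal hypersurfaces: $H^n(K^\bullet(f))_m=0$ for $m\leq\frac{nd-1}{2}$, i.e.\ every relation $\sum a_j\,\partial f/\partial x_j=0$ of low degree is a trivial (Koszul) one. From this one deduces a division lemma: if $G\in S_t$ with $t<2d-n-1$ and $Gx_j\in J(f)$ for all $j$, then writing $Gx_i=\sum_k H_{ik}\,\partial f/\partial x_k$ and using $x_i(x_jG)-x_j(x_iG)=0$ produces syzygies $x_iH_{jk}-x_jH_{ik}$ of degree $<d-1$, which must vanish identically, forcing $x_i\mid H_{ik}$ and hence $G\in J(f)$. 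The theorem then follows by taking $Q$ of maximal degree $l<d-n-1$ with $PQ\notin J(f)$ and applying the division lemma to $G=PQ$. If you want to salvage your approach you would essentially have to prove the self-duality of $N(f)$ plus a nodal multiplication surjectivity, which is considerably harder than the route the paper takes.
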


As it is proved in \cite{Zhao}, Chapter 3, Example 3.1.3, $(S/J(f))_d$ parameterizes the equivalence classes of deformations of the pair $(\bb{P}^n, X_f)$. Alternatively, let $GL=GL(n+1,\bb{C})$ be the general linear group of rank $n+1$. Then $GL$ acts on $S_d$ by coordinate transformations and for any $f\in S_d$, the tangent space at $f$ of the orbit $GL\cdot f$ is given by $J(f)_d$, see \cite{D87}, Chapter 4, Formula (4.16). It follows that $(S/J(f))_d$ can be seen as the set of directions in $S_d$ that are transversal to the orbit $GL\cdot f$  at $f$. In addition, any smooth analytic subset $\mathcal{U}\subseteq S_d$ can be seen as a family of hypersurfaces in $\bb{P}^n$. If $f\in\mathcal{U}$ and $T_f\mathcal{U}\cap J(f)_d=\{0\}$, then we call $\mathcal{U}$ an {\bf effective} deformation of $f$. From this point of view, $(S/J(f))_d$ is the maximal set of effective deformations of $f$.

Now let $X_f: f=0$ be a nodal hypersurface in $\bb{P}^n$ and let $n(f)$ be the number of nodes in $X_f$. Then we have a moduli space, denoted by $\mathfrak{B}_f\subseteq S_d$, parameterizing all nodal hypersurfaces in $\bb{P}^n$ having exactly $n(f)$ nodes. By the discussion following Corollary 3.8 in \cite{D92}, Chapter 1, we have that $\mathfrak{B}_f$ is a constructible subvariety of $S_d$ and the topological type of $(\bb{P}^n, X_g)$ is locally trivial for $g\in \mathfrak{B}_f$. Moreover, for any $g$ lying in the connected component of $\mathfrak{B}_f$ containing $f$, $(\bb{P}^n, X_g)$ is topologically equivalent to $(\bb{P}^n, X_f)$.

Now assume $\mathcal{U}\subseteq\mathfrak{B}_f$ is a connected {\bf smooth} subvariety and $f\in\mathcal{U}$. For any $g\in\mathcal{U}$, $X_g$ is homeomorphic to $X_f$ by the local topological triviality of the pair $(\bb{P}^n,X_g)$. So there is a natural identification $H^{n-1}_0(X_g)\cong H^{n-1}_0(X_f)$, where $H^{n-1}_0(X_g)$ is the primitive cohomology of $X_g$ defined by $H^{n-1}_0(X_g)=\coker(H^{n-1}(\bb{P}^n)\to H^{n-1}(X_g))$. In particular, $\dim H^{n-1}_0(X_g)$ is constant for $g\in\mathcal{U}$.

Moreover, $H^{n-1}_0(X_g)$ has a natural mixed Hodge structure, since $X_g$ is a singular algebraic variety, see \cite{PS}, Part II, Chapter 5. It turns out that $\dim F^{n-1}H^{n-1}_0(X_g)$ and $\dim F^{n-2}H^{n-1}_0(X_g)$ are also constant for $g\in\mathcal{U}$ (in most cases), see Corollary \ref{cor: dimension} below. Thus, we have the following well-defined map
\begin{equation}\label{eq: mP}
\mathcal{P}:\qquad \mathcal{U}\ni g\mapsto (F^{n-1}H^{n-1}_0(X_g), F^{n-2}H^{n-1}_0(X_g))\in\mathcal{F}
\end{equation}
where $\mathcal{F}$ is the corresponding flag manifold of subspaces of $H^{n-1}_0(X_f)$.

By relating the primitive cohomology with the graded pieces of the algebra $S/J(f)$ and applying Theorem \ref{main thm}, we prove the following, as a generalization of \cite{Zhao}, Chapter 3.
\begin{thm}\label{main cor}
Assume $n\geq 3$ is odd or $n\geq 6$ is even. Let $X_f: f=0$ be a nodal hypersurface in $\bb{P}^n$ of degree $d\geq n+1$ and let $\mathcal{U}\subseteq \mathfrak{B}_f$ be a smooth subvariety of $\mathfrak{B}_f$ which gives an effective deformation of $X_f$. Then the map $\mathcal{P}$ above is well-defined and the differential $d\mathcal{P}$ is injective at $f$.
\end{thm}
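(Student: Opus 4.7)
The plan is to reduce the injectivity of $d\mathcal{P}$ at $f$ to the injectivity of the multiplication map $\varphi$ provided by Theorem \ref{main thm}, using a Griffiths-type residue description of the Hodge filtration on the primitive cohomology of a nodal hypersurface. Well-definedness of $\mathcal{P}$ is immediate from Corollary \ref{cor: dimension}, which (as already noted before the statement) ensures that $\dim F^{n-1}H^{n-1}_0(X_g)$ and $\dim F^{n-2}H^{n-1}_0(X_g)$ are constant on $\mathcal{U}$, so that $\mathcal{P}$ is a genuine morphism into the flag manifold near $f$.

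Next I would set up the differential. On the source, effectivity of $\mathcal{U}$ means that the natural map $T_f\mathcal{U}\to (S/J(f))_d$ sending a first-order deformation to its class is injective, so it suffices to show that any $[P]\in (S/J(f))_d$ lying in its image with $d\mathcal{P}_f([P])=0$ vanishes. On the target, the tangent space of the flag manifold $\mathcal{F}$ at $\mathcal{P}(f)$ embeds into
\[
\Hom\bigl(F^{n-1}H^{n-1}_0(X_f),\,H^{n-1}_0(X_f)/F^{n-1}\bigr)\ \oplus\ \Hom\bigl(F^{n-2}/F^{n-1},\,H^{n-1}_0(X_f)/F^{n-2}\bigr).
\]
To compute $d\mathcal{P}_f$ I would invoke Griffiths-type identifications for $X_f$: under the hypothesis on $n$, the top piece $F^{n-1}H^{n-1}_0(X_f)$ is canonically isomorphic to $(S/J(f))_{d-n-1}$, and $F^{n-2}/F^{n-1}$ is controlled by $(S/J(f))_{2d-n-1}$ (either equal to, or a well-understood quotient of, it, after accounting for the contribution of the nodes to the mixed Hodge structure).

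Granted these identifications, Griffiths transversality and the Kodaira--Spencer description of the period map identify the component of $d\mathcal{P}_f$ landing in $\Hom(F^{n-1},F^{n-2}/F^{n-1})$ with the multiplication map $[P]\mapsto\bigl([Q]\mapsto [PQ]\bigr)$ from $(S/J(f))_d$ into $\Hom((S/J(f))_{d-n-1},(S/J(f))_{2d-n-1})$, which is precisely $\varphi$. Consequently $d\mathcal{P}_f([P])=0$ forces $\varphi([P])=0$, and Theorem \ref{main thm} yields $[P]=0$; combined with the injection $T_f\mathcal{U}\hookrightarrow(S/J(f))_d$, this proves that $d\mathcal{P}_f$ is injective.

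The main obstacle I anticipate is establishing the Griffiths-type identification of the Hodge pieces $F^{n-1}$ and $F^{n-2}/F^{n-1}$ with the relevant graded parts of $S/J(f)$ in the nodal setting. For smooth hypersurfaces this is classical, but for nodal $X_f$ one must work with the mixed Hodge structure and carefully track the contribution of the nodes—presumably via the saturation of $J(f)$ or the ideal of $\Sing X_f$—and verify that the parity/dimension hypothesis $n\ge 3$ odd or $n\ge 6$ even rules out extra pieces that would obstruct the clean reduction of $d\mathcal{P}_f$ to $\varphi$. Once this dictionary is in place, the rest of the argument is a matter of assembling standard facts about infinitesimal period maps.
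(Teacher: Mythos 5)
Your proposal is correct and follows essentially the same route as the paper: well-definedness via Corollary \ref{cor: dimension}, injectivity of $T_f\mathcal{U}\to (S/J(f))_d$ from effectivity, residue-type identifications of $F^{n-1}$ and $Gr_F^{n-2}$ with $(S/J(f))_{d-n-1}$ and $(S/J(f))_{2d-n-1}$, identification of the relevant component of $d\mathcal{P}_f$ with the multiplication map $\varphi$, and then Theorem \ref{main thm}. The ``main obstacle'' you flag is precisely where the paper invests its effort, namely Proposition \ref{prop: hodge} (built on the parity case analysis and the link cohomology at the nodes) and the explicit computation of $d\mathcal{P}_f$ in Lemma \ref{lem: dp}, which together yield the commutative square \eqref{eq: square} that makes your reduction rigorous.
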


Thus, loosely speaking, the infinitesimal Torelli theorem also holds for nodal hypersurfaces.

Note that for smooth hypersurfaces, the {\bf generic} Torelli theorem holds, see \cite{VO2}, Part II, Chapter 6, Section 6.3.2, and it remains an interesting question whether this is also the case for nodal hypersurfaces. Recall that in the proof of the generic Torelli theorem for smooth hypersurfaces, the essential part is to show that a generic homogeneous polynomial can be reconstructed from its Jacobian ideal, which also holds for nodal hypersurfaces by Theorem 1.1 in \cite{ZW}, because a generic $f$ of degree $d>3$ with the associated hypersurface $X_f$ having a fixed number of nodes is not of Sabastiani-Thom type, which is the only exception for $f$ not to be reconstructed from $J(f)$; another key ingredient in the smooth case is the symmetriser lemma, which is still open for nodal hypersurfaces.\\

The author would like to thank an anonymous referee, whose remarks make the exposition of this paper improved.

\section{Syzygies of the Jacobian ideal}\label{sec: syzygies}
Let $K^\bullet(f)$ be the Koszul complex of $\frac{\p f}{\p x_0},\cdots, \frac{\p f}{\p x_n}$ with the natural grading $\deg(x_j)=1$ and $\deg(dx_j)=1$:
$$
K^\bullet(f):\quad 0\to \Omega^0\to\Omega^1\to\cdots\to\Omega^{n+1}\to 0
$$
where $\Omega^1=\sum_{i=0}^n Sdx_i$ and $\Omega^p=\bigwedge^p\Omega^1$, and the differentials are given by the wedge product with $df=\sum_{i=0}^n\frac{\p f}{\p x_i}dx_i$.

The homogeneous component of the cohomology group $H^n(K^\bullet(f))_{n+r}$ describes the syzygies
$$
\sum_{j=0}^n a_j \frac{\p f}{\p x_j}=0
$$
with $a_j\in S_r$ modulo the trivial syzygies generated by
$$
\frac{\p f}{\p x_i}\frac{\p f}{\p x_j}+\biggl(-\frac{\p f}{\p x_i}\biggr)\biggl(\frac{\p f}{\p x_j}\biggr)=0,\qquad i<j.
$$
We may restate the main result in \cite{D13} or Theorem 9 in \cite{DS14} in the following form.
\begin{lem}\label{lem: syzygies}
Let $X_f: f=0$ be a nodal hypersurface in $\bb{P}^n$ of degree $d>2$ and $n\geq 3$, then
$H^n(K^\bullet(f))_m=0$ for any
$$
m\leq \frac{nd-1}{2}.
$$
\end{lem}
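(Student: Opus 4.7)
The plan is to derive this lemma as a direct reformulation of the main theorem of \cite{D13} (equivalently Theorem 9 of \cite{DS14}), which supplies the desired vanishing of Koszul syzygies of the partial derivatives of $f$ in low degrees for nodal projective hypersurfaces. The sentence preceding the statement already signals this, so the real content of the proof is to verify that the two indexing conventions agree.

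First I would record the identification recalled just above the lemma: a class in $H^n(K^\bullet(f))_{n+r}$ is represented by a syzygy $\sum_{j=0}^n a_j \partial f/\partial x_j = 0$ with $a_j \in S_r$, modulo the trivial Koszul syzygies $\partial_i f\cdot \partial_j f - \partial_j f\cdot\partial_i f = 0$. The grading conventions $\deg(x_j) = \deg(dx_j) = 1$ imply that the Koszul differential raises total degree by $d$ and that $H^n$ is naturally concentrated in degrees $\geq n$; the shift from $r$ (the degree of a syzygy) to $m = n+r$ (the total degree of a representative living in $\Omega^n$) is accounted for by the $n$ wedge factors $dx_{i_1}\wedge\cdots\wedge dx_{i_n}$ present in any such representative.

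With this identification in place, the cited theorems give precisely what is needed: for a nodal hypersurface of degree $d > 2$ with $n\geq 3$, every syzygy of degree $r \leq \frac{nd-1}{2} - n$ among $\partial f/\partial x_0,\ldots,\partial f/\partial x_n$ is a combination of the trivial Koszul syzygies, which yields the vanishing of $H^n(K^\bullet(f))_m$ for $m \leq \frac{nd-1}{2}$. The only real obstacle is a bookkeeping one, namely checking that the sharp bound given in \cite{D13} (phrased in terms of the degree of the first non-trivial syzygy of the Jacobian ideal of a nodal hypersurface) translates, after accounting for the Koszul grading used here, to the bound $m \leq \frac{nd-1}{2}$; no new geometric argument is required.
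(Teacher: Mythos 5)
Your proposal matches the paper's treatment exactly: the paper gives no independent proof of this lemma, presenting it purely as a restatement of the main result of \cite{D13} (equivalently Theorem 9 of \cite{DS14}), with the grading convention $m = n + r$ for a syzygy $(a_j)$ with $a_j \in S_r$ set up in the paragraph immediately preceding the statement. Your extra care in verifying that the two indexing conventions agree is the only substantive content such a "proof" could have, and it is consistent with what the paper does.
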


Let $f_s\in S_d$ be such that $X_{f_s}: f_s=0$ is a smooth hypersurface. It is well-known that $\dim (S/J(f_s))_k$ depends only on $n,d$ and $k$, see \cite{D87}, Chapter 7, Proposition 7.22. In the introduction part of \cite{D13R}, the following two notions are given:
$$
ct(X_f)=\max\{q\quad:\quad \dim (S/J(f))_k=\dim (S/J(f_s))_k\text{ for all }k\leq q\}
$$
and
$$
mdr(X_f)=\min\{q\quad:\quad H^n(K^\bullet(f))_{q+n}\neq 0\}.
$$
They have the following relation
$$
ct(X_f)=mdr(X_f)+d-2,
$$
see loc. cit.. We have the following.

\begin{lem}\label{lem: dimension}
Let $X_f: f=0$ be a nodal hypersurface in $\bb{P}^n$ of degree $d\geq n+1$ and $n\geq 3$, then
$$
\dim (S/J(f))_k=\dim (S/J(f_s))_k,\quad k=d-n-1, 2d-n-1.
$$
In particular, $\dim (S/J(f))_k$ does not depend on the concrete equation of the polynomial $f$ for $k=d-n-1, 2d-n-1$.
\end{lem}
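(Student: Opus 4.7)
The plan is to read off a lower bound for $mdr(X_f)$ from Lemma \ref{lem: syzygies} and then convert it into a lower bound for $ct(X_f)$ via the cited relation $ct(X_f)=mdr(X_f)+d-2$. Once we know $ct(X_f)\geq 2d-n-1$, both desired equalities follow immediately from the definition of $ct$.

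First, Lemma \ref{lem: syzygies} says $H^n(K\B(f))_m=0$ for every integer $m\leq (nd-1)/2$. In view of the definition
$$
mdr(X_f)=\min\{q\ :\ H^n(K\B(f))_{q+n}\neq 0\},
$$
this means that any $q$ with $q+n\leq \lfloor (nd-1)/2\rfloor$ forces the corresponding graded piece to vanish, so
$$
mdr(X_f)\geq \Bigl\lfloor \frac{nd-1}{2}\Bigr\rfloor -n+1.
$$

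Next, using $ct(X_f)=mdr(X_f)+d-2$ from \cite{D13R}, I would deduce
$$
ct(X_f)\geq \Bigl\lfloor \frac{nd-1}{2}\Bigr\rfloor+d-n-1.
$$
The only remaining step is an elementary inequality: under the hypotheses $n\geq 3$ and $d\geq n+1$ one has $\lfloor (nd-1)/2\rfloor\geq d$. Indeed, the estimate $(n-2)d\geq 2$ holds because $d\geq n+1\geq 4$ and $n-2\geq 1$, which gives $nd-2\geq 2d$ and hence $\lfloor (nd-1)/2\rfloor\geq d$ regardless of the parity of $nd$. Combining, $ct(X_f)\geq 2d-n-1$.

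Finally, by the very definition of $ct(X_f)$, this means
$$
\dim (S/J(f))_k=\dim (S/J(f_s))_k\quad \text{for every }k\leq 2d-n-1,
$$
which covers both $k=d-n-1$ and $k=2d-n-1$, since $d-n-1\leq 2d-n-1$. The independence of $\dim (S/J(f))_k$ on the specific equation $f$ then follows because the right-hand side $\dim (S/J(f_s))_k$ depends only on $n,d,k$, as recalled from \cite{D87}. I do not anticipate any substantive obstacle here; the only thing to be careful about is the floor computation, and even that is uniformly controlled by the hypothesis $d\geq n+1$.
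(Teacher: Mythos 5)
Your argument is correct and is essentially the paper's own proof: both lower-bound $mdr(X_f)$ via Lemma \ref{lem: syzygies}, convert this to a lower bound on $ct(X_f)$ using $ct(X_f)=mdr(X_f)+d-2$, and verify by elementary arithmetic (using $n\geq 3$, $d\geq n+1$) that $ct(X_f)\geq 2d-n-1$. The only cosmetic difference is your explicit use of the floor function, where the paper works directly with $\frac{nd-1}{2}-n$ and obtains the same conclusion.
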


\begin{proof}
We only need to check that $2d-n-1\leq ct(X_f)$. Indeed, by Lemma \ref{lem: syzygies}, we immediately have
$$
ct(X_f)=mdr(X_f)+d-2\geq\biggl(\frac{nd-1}{2}-n\biggr)+d-2>2d-n-1,
$$
where the last inequality follows from $n\geq 3$ and $d\geq n+1$.
\end{proof}

\subsection{Proof of Theorem \ref{main thm}}
To prove Theorem \ref{main thm}, we first prove the following.
\begin{lem}\label{lem: in}
Assume $X_f: f=0$ is a nodal hypersurface in $\bb{P}^n$ of degree $d\geq n+1$ and $n\geq 3$.
Let $G\in S_t$ such that $t<2d-n-1$ and $Gx_j\in J(f)$ for all $j=0,\cdots, n$, then $G\in J(f)$.
\end{lem}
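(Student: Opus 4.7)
The plan is to express the hypothesis as $Gx_j=\sum_i a_{ij}\partial_i f$ with $a_{ij}\in S_{t-d+2}$, derive the Koszul syzygy $\sum_i(a_{ij}x_k-a_{ik}x_j)\partial_i f=0$ from the identity $Gx_jx_k=Gx_kx_j$, apply Lemma~\ref{lem: syzygies} to conclude that this syzygy is trivial, and then exploit the assumption $t<2d-n-1$ together with $n\ge 3$ to promote the congruence ``$a_{ij}x_k-a_{ik}x_j\in J(f)$'' to the outright polynomial equality ``$a_{ij}x_k=a_{ik}x_j$'' in $S$.

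Concretely, the syzygy has coefficient degree $t-d+3$, so its total internal degree is $n+(t-d+3)$, which is at most $(nd-1)/2$ whenever $(n-2)d\ge 3$; this holds for $n\ge 3$ and $d\ge n+1$. Lemma~\ref{lem: syzygies} therefore forces the syzygy to be trivial, so $a_{ij}x_k-a_{ik}x_j\in J(f)_{t-d+3}$. The crucial numerical observation is that for $t\le 2d-n-2$ and $n\ge 3$ one has $t-d+3\le d-n+1<d-1$, so $J(f)_{t-d+3}=0$ (since $J(f)$ is generated in degree $d-1$). Consequently $a_{ij}x_k=a_{ik}x_j$ holds in the polynomial ring $S$ itself.

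Fixing $i$ and setting $j=0$, the identity $a_{i0}x_k=a_{ik}x_0$ in the UFD $S$ (combined with $\gcd(x_0,x_k)=1$ for $k\ne 0$) forces $x_0\mid a_{i0}$, so $a_{i0}=c_ix_0$ for some $c_i\in S_{t-d+1}$, and then $a_{ij}=c_ix_j$ for every $j$. Substituting back,
$$
Gx_j=\sum_i a_{ij}\,\partial_i f=x_j\sum_i c_i\,\partial_i f,
$$
and cancelling $x_j$ in the domain $S$ yields $G=\sum_i c_i\,\partial_i f\in J(f)$.

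The main obstacle is purely numerical: one must check both (i) that the total internal degree of the syzygy lies in the vanishing range of Lemma~\ref{lem: syzygies}, and (ii) that the coefficient degree $t-d+3$ is strictly less than $d-1$ so that $J(f)_{t-d+3}=0$. Both rely on $t<2d-n-1$ and $n\ge 3$; condition (i) additionally uses $d\ge n+1$. Once these two inequalities are verified, the entire argument reduces to a clean polynomial-divisibility step in $S$, with no residual Koszul-cohomology obstruction to handle.
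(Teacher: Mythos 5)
Your proof is correct and follows essentially the same route as the paper: write $Gx_j=\sum_i a_{ij}\partial_i f$, form the syzygy from $x_k(Gx_j)-x_j(Gx_k)=0$, kill it via Lemma \ref{lem: syzygies} using the same degree bound $n+(t-d+3)\le d+1\le (nd-1)/2$, observe the coefficients lie in $J(f)$ in a degree below $d-1$ and hence vanish, and finish by divisibility in $S$. Your write-up of the final step ($a_{ij}=c_ix_j$, then cancel $x_j$) is in fact slightly more explicit than the paper's.
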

\begin{proof}
Assume
\begin{equation}\label{eq: 1}
Gx_i=\sum_{k=0}^n H_{ik}\frac{\p f}{\p x_k},\quad i=0,\cdots, n,
\end{equation}
with $H_{ik}\in S_{t+2-d}, i,k=0,\cdots, n$, then
$$
0=x_i(x_jG)-x_j(x_iG)=\sum_{k=0}^n(x_iH_{jk}-x_jH_{ik})\frac{\p f}{\p x_k}.
$$
Note that
$$
t+3-d+n\leq(2d-n-2)+3+n-d=d+1\leq\frac{nd-1}{2},
$$
so by Lemma \ref{lem: syzygies}, we get $x_iH_{jk}-x_jH_{ik}\in J(f)$ for all $i,j,k=0,\cdots, n$ while all these polynomials have degree $t+3-d<(2d-n-1)+3-d=d-n+2\leq d-1$, so they must all vanish identically; in particular,
$$
x_iH_{jk}-x_jH_{ik}=0,\quad i\neq j
$$
thus, $x_i|H_{ik}$. It follows that $G\in J(f)$ as desired.
\end{proof}

{\it Proof of Theorem \ref{main thm}: }
We first remark that Theorem \ref{main thm} holds when $d=n+1$. In fact, in this case, $J(f)_{d-n-1}=J(f)_0=0$ and $(S/J(f))_{d-n-1}=S_0=\bb{C}$ consists of constants. Since $1\in (S/J(f))_{d-n-1}$ and $\varphi([P])(1)=[P]$, one sees easily that $\varphi$ is injective.

Thus, in the sequel of the proof, we will focus on the case $d>n+1$.

Aiming at a contradiction, we assume that there exists $P\in S_d\setminus J(f)_d$ such that $\varphi([P])=0$.

Then there exists a $Q\in S_l, 0\leq l<d-n-1$ such that $PQ\notin J(f)$ and $l$ is chosen to be maximal. By the maximality of $l$, we have $(PQ)x_j\in J(f)$ for all $j=0,\cdots, n$. Note that $PQ\in S_{l+d}$ and $l+d<2d-n-1$, hence by Lemma \ref{lem: in}, $PQ\in J(f)$, contradiction.

\section{Hodge theory for nodal hypersurfaces}\label{sec: hodge}
Let $X_f: f=0$ be a nodal hypersurface in $\bb{P}^n$ of degree $d\geq n+1$ and $n\geq 3$. The cohomology groups under consideration below all have $\bb{C}$ as coefficients unless otherwise explicitly pointed out.

By the Lefschetz hyperplane theorem for singular varieties (see \cite{Ha}), we have
$$
H^i(X_f)=H^i(\bb{P}^n),\qquad i<n-1,
$$
and
$$
H^{n-1}(\bb{P}^n)\to H^{n-1}(X_f)
$$
is injective. Let
$$
H^{n-1}_0(X_f)=\coker(H^{n-1}(\bb{P}^n)\to H^{n-1}(X_f)),
$$
be the primitive cohomology of $X_f$. Then $H^{n-1}_0(X_f)$ admits a mixed Hodge structure. Moreover, let $U_f=\bb{P}^n\setminus X_f$ be the complement of $X=X_f$, then $H^n(U_f)$ also admits a mixed Hodge structure and $H^n(U_f)$ and $H^{n-1}_0(X_f)$ are closely related.

\subsection{Relation between $H^*(U_f)$ and $H^*(X_f)$}

Let $X_f^*$ be the smooth locus of $X_f$ and let
$$
H^{n-1}_0(X_f^*)=\coker(H^{n-1}(\bb{P}^n)\to H^{n-1}(X_f^*)).
$$
Then $H^{n-1}_0(X_f^*)$ has a natural mixed Hodge structure. Moreover, as is shown in \cite{D92}, Chapter 6, Corollary 3.11, there is a natural residue isomorphism
\begin{equation}\label{eq: UX}
\ov{R}_f:\quad H^n(U_f)\xrightarrow{\sim} H^{n-1}_0(X_f^*)
\end{equation}
which is also an isomorphism of mixed Hodge structures of type $(-1,-1)$.

Let $i: X_f^*\to X_f$ be the inclusion. We have the naturally induced homomorphisms in cohomology
$$
i^*: H^{n-1}(X_f)\to H^{n-1}(X_f^*)
$$
and
$$
i^*_0: H^{n-1}_0(X_f)\to H^{n-1}_0(X_f^*).
$$
Moreover, $i^*, i^*_0$ are also morphisms of mixed Hodge structures. Our discussion will be divided into two cases, regarding whether $n$ is odd or even.

\subsubsection{Case 1: $n$ is odd }
When $n$ is odd, the variety $X_f$ is a $\bb{Q}$-homology manifold, i.e., for any point $x\in X_f$, $H^i(X_f, X_f\setminus\{x\},\bb{Q})=\bb{Q}$ if $i=2n$ and $0$ otherwise. Moreover, we have the following claim.

\begin{claim}
$i^*$ and $i_0^*$ are both isomorphisms.
\end{claim}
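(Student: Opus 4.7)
The plan is to exploit the hypothesis that, when $n$ is odd, $X_f$ is a $\bb{Q}$-homology manifold, so that the local cohomology of $X_f$ along its finite singular locus $\Sigma=X_f\setminus X_f^*$ is concentrated in the top real degree $2(n-1)$. First I would write down the long exact sequence of the pair $(X_f,X_f^*)$,
$$
\cdots\to H^i_\Sigma(X_f)\to H^i(X_f)\xrightarrow{i^*} H^i(X_f^*)\to H^{i+1}_\Sigma(X_f)\to\cdots,
$$
where by excision $H^i_\Sigma(X_f)\cong\bigoplus_{p\in\Sigma}H^i(X_f,X_f\setminus\{p\})$. So it suffices to prove that the two local groups $H^{n-1}_\Sigma(X_f)$ and $H^n_\Sigma(X_f)$ vanish.

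The $\bb{Q}$-homology manifold property says that each summand $H^i(X_f,X_f\setminus\{p\},\bb{Q})$ vanishes unless $i=2(n-1)$. Since $n\geq 3$ yields $n<2(n-1)$, both $i=n-1$ and $i=n$ satisfy this strict inequality, so the relevant local groups vanish and $i^*$ is an isomorphism.

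For $i_0^*$, I would run naturality through the commutative diagram with exact rows
$$
\begin{CD}
0 @>>> H^{n-1}(\bb{P}^n) @>>> H^{n-1}(X_f) @>>> H^{n-1}_0(X_f) @>>> 0\\
@. @| @VV{i^*}V @VV{i_0^*}V @.\\
0 @>>> H^{n-1}(\bb{P}^n) @>>> H^{n-1}(X_f^*) @>>> H^{n-1}_0(X_f^*) @>>> 0
\end{CD}
$$
The bottom row is exact because $H^{n-1}(\bb{P}^n)\to H^{n-1}(X_f^*)$ factors as the (injective) Lefschetz map followed by the isomorphism $i^*$, hence is itself injective, while the right-hand object is by definition the cokernel. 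The five-lemma then yields that $i_0^*$ is an isomorphism.

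I do not anticipate a serious obstacle in this plan. The only substantive input is the $\bb{Q}$-homology manifold property for nodal hypersurfaces of even complex dimension, equivalently the fact that the link of an ordinary double point in even complex dimension is a rational homology sphere, which one may verify directly from the local model $\sum x_i^2=0$ or simply cite; the remainder is a formal manipulation of the local cohomology sequence together with a five-lemma.
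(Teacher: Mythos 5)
Your proposal is correct and follows essentially the same route as the paper: the long exact sequence of the pair $(X_f,X_f^*)$, excision to reduce to the local groups at the nodes, and the $\bb{Q}$-homology manifold property (local cohomology concentrated in the top real degree $2(n-1)$, which excludes $n-1$ and $n$ once $n\geq 3$) to kill the two relative terms. The only cosmetic differences are your phrasing via local cohomology $H^i_\Sigma$ and your explicit five-lemma step for $i_0^*$, which the paper leaves implicit; note also that you use the correct top degree $2(n-1)$ where the paper's text writes $2n$.
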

\begin{proof}
Indeed, we have a long exact sequence of mixed Hodge structures with respect to the pair $(X_f,X_f^*)$:
\begin{equation}\label{eq: revise1}
\rightarrow H^{n-1}(X_f,X_f^*)\to H^{n-1}(X_f)\xrightarrow{i^*} H^{n-1}(X_f^*)\to H^{n}(X_f,X_f^*)
\end{equation}
Let $x_i, i=1,\cdots, r$ be all the nodes in $X_f$, then $X_f^*=X_f\setminus\{x_1,\cdots, x_r\}$, and furthermore, by the excision theorem
$$
H^{n-1}(X_f,X_f^*)=\bigoplus_{i=1}^rH^{n-1}(X_f,X_f\setminus\{x_i\})=0,
$$
since $X_f$ is a $\bb{Q}$-homology manifold and $n-1\neq 0,2n$ for $n\geq 3$. Similarly, $H^n(X_f,X_f^*)=0$. Thus, it follows from \eqref{eq: revise1} that $i^*$ and $i^*_0$ are both isomorphisms.
\end{proof}
Note that the weights of $H^{n-1}(X_f)$ are $\leq n-1$ since $X_f$ is compact while the weights of $H^{n-1}(X_f^*)$ are $\geq n-1$ since $X_f^*$ is smooth (see \cite{PS}, p. 131, Table 5.1), hence both $H^{n-1}(X_f^*)$ and $H^{n-1}(X_f)$ have pure Hodge structures of weight $n-1$ and it follows from the isomorphism \eqref{eq: UX} that $H^n(U_f)$ has a pure Hodge structure of weight $n+1$.

Let
$$
R_f=(i_0^*)^{-1}\circ\ov{R}_f:\quad H^n(U_f)\to H^{n-1}_0(X_f).
$$
Then $R_f$ is an isomorphism of mixed Hodge structures of type $(-1,-1)$.  It follows that we have isomorphisms
$$
R_f:\quad F^pH^n(U_f)\xrightarrow{\sim}F^{p-1}H^{n-1}_0(X_f)
$$
for all $p$. In particular, there are isomorphisms
\begin{equation}\label{hodge1}
R_f:\quad Gr_F^{p+1}H^n(U_f)\xrightarrow{\sim}Gr_F^pH^{n-1}_0(X_f),\quad p=n-1,n-2.
\end{equation}

\subsubsection{Case 2: $n$ is even }
When $n$ is even, $X_f$ is no longer a $\bb{Q}$-homology manifold. However, there is still an explicit description of the relations between $H^n(U_f)$ and $H^{n-1}_0(X_f)$. Note that in this case $H^{n-1}(\bb{P}^n)=0$ and thus
$$
H^{n-1}_0(X_f)=H^{n-1}(X_f),\quad H^{n-1}_0(X_f^*)=H^{n-1}(X_f^*)
$$
and $i^*=i^*_0$. Moreover, there exists an exact sequence of mixed Hodge structures
\begin{equation}\label{eq: exact}
\cdots\to H^{n-1}(X_f,X_f^*)\to H^{n-1}(X_f)\xrightarrow{i^*} H^{n-1}(X_f^*)\to H^n(X_f,X_f^*)\to\cdots.
\end{equation}

To make use of this exact sequence, we first give the following claim.
\begin{claim}
For $k=n-1,n$, $H^k(X_f,X_f^*)$ has a pure Hodge structure of type $(\rho_k,\rho_k)$ for some $\rho_k\in\bb{N}$.
\end{claim}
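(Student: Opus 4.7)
The plan is to reduce the claim to a local computation at each node via excision, and then analyze the local mixed Hodge structure using the blowup resolution, whose exceptional divisors are smooth even-dimensional quadrics of Hodge-Tate type. Let $x_1, \ldots, x_r$ be the nodes of $X_f$ and choose a small contractible analytic neighborhood $U_i$ of $x_i$ in $X_f$, with $U_i^* := U_i \setminus \{x_i\}$. Excision together with the contractibility of $U_i$ yields isomorphisms of mixed Hodge structures
$$H^k(X_f, X_f^*) \;\cong\; \bigoplus_{i=1}^r H^k(U_i, U_i^*) \;\cong\; \bigoplus_{i=1}^r H^{k-1}(U_i^*), \qquad k \geq 2,$$
so it suffices to show that each $H^{k-1}(U_i^*)$ is pure of type $(\rho_k, \rho_k)$ for $k = n-1, n$.

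Let $\pi : \tilde X \to X_f$ be the blowup at the nodes. Since each node is an ordinary double point with local model $\sum_{j=1}^n z_j^2 = 0$, the variety $\tilde X$ is smooth projective and the exceptional fiber $Q_i := \pi^{-1}(x_i)$ is a smooth quadric of dimension $n-2$ in $\bb P^{n-1}$. Setting $\tilde U_i := \pi^{-1}(U_i)$, the map $\pi$ restricts to an analytic isomorphism $\tilde U_i \setminus Q_i \xrightarrow{\sim} U_i^*$, and $\tilde U_i$ deformation retracts onto $Q_i$. The Thom--Gysin sequence for the smooth divisor $Q_i$, inherited from the algebraic Gysin sequence on $\tilde X$ localized near $Q_i$, is an exact sequence of mixed Hodge structures
$$\cdots \to H^{k-2}(Q_i)(-1) \to H^k(Q_i) \to H^k(U_i^*) \to H^{k-1}(Q_i)(-1) \to \cdots.$$

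The decisive input is that for $n$ even, $Q_i$ is an even-dimensional smooth quadric whose rational cohomology is concentrated in even degrees and generated by classes of linear subspaces; explicitly, $H^{2j}(Q_i; \bb Q) \cong \bb Q(-j)^{\oplus a_j}$ and $H^{\mathrm{odd}}(Q_i; \bb Q) = 0$, making $H^*(Q_i)$ pure of Hodge--Tate type in every degree. Plugging this into the Gysin sequence at $k = n-1$ (where the odd-degree terms $H^{n-3}(Q_i)(-1)$ and $H^{n-1}(Q_i)$ vanish), $H^{n-2}(U_i^*)$ is realized as the cokernel of $H^{n-4}(Q_i)(-1) \to H^{n-2}(Q_i)$, hence is pure of type $\bigl(\tfrac{n-2}{2}, \tfrac{n-2}{2}\bigr)$; at $k = n$, $H^{n-1}(U_i^*)$ embeds into $H^{n-2}(Q_i)(-1)$, so it is pure of type $\bigl(\tfrac{n}{2}, \tfrac{n}{2}\bigr)$. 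Summing over the $r$ nodes yields the claim with $\rho_{n-1} = (n-2)/2$ and $\rho_n = n/2$. The main subtlety to verify is the compatibility of the analytic Thom--Gysin sequence with the mixed Hodge structure on $H^k(X_f, X_f^*)$; this is standard and follows by restricting the algebraic Gysin sequence for $Q_i \subset \tilde X$ to a Zariski open neighborhood of $Q_i$ together with the local nature of mixed Hodge structures away from the other exceptional components.
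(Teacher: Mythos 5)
Your proposal is correct in substance but follows a genuinely different route from the paper. The paper reduces to the links $K_i$ of the nodes via excision and the conic structure theorem, identifies each $K_i$ topologically with the unit sphere bundle of $TS^{n-1}$ to see that $H^{k-1}(K_i)$ is \emph{one-dimensional} for $k=n-1,n$, and then invokes Hodge symmetry: a one-dimensional mixed Hodge structure is automatically pure of some type $(\rho,\rho)$. The values of $\rho_k$ are then obtained separately from references (Durfee's weight bounds and the discussion in \cite{D92}). Your argument instead resolves each node by a single blowup, so that the punctured neighbourhood $U_i^*$ is computed by the Gysin sequence of the smooth exceptional quadric $Q_i\subseteq\bb{P}^{n-1}$, whose cohomology is Hodge--Tate; this yields purity \emph{and} the explicit values $\rho_{n-1}=(n-2)/2$, $\rho_n=n/2$ in one stroke, which is more information than the claim itself asserts and is consistent with what the paper later extracts from \cite{D92}. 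The one point you should tighten is the justification that your Thom--Gysin sequence is a sequence of mixed Hodge structures computing $H^{k-1}(U_i^*)$: restricting the algebraic Gysin sequence to a Zariski open neighbourhood of $Q_i$ does not literally do the job, because the complement of $Q_i$ in a Zariski open set is not homotopy equivalent to the punctured analytic neighbourhood $U_i^*$. The correct foundation is Durfee's construction of the mixed Hodge structure on punctured neighbourhoods \cite{Df83}, which produces exactly your sequence (via the pair $(T,\partial T)$ of a regular neighbourhood of the exceptional divisor and the Thom isomorphism $H^k_{Q_i}(\tilde X)\cong H^{k-2}(Q_i)(-1)$) as a sequence of mixed Hodge structures when the exceptional divisor is smooth. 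Since the paper also leans on \cite{Df83} for the naturality and weight bounds of these local mixed Hodge structures, this is a citation fix rather than a gap in the mathematics.
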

\begin{proof}
Let $a_1,\cdots, a_m$ be the nodes in $X_f$ and $B_i\ni a_i, i=1,\cdots,m$ be a small ball in $\bb{P}^n$ around $a_i$ such that $B_i\cap B_j=\emptyset$ for $i\neq j$.

By the excision theorem and conic structure theorem (see \cite{D92}, Chapter 1, Theorem 5.1),
$$
H^k(X_f,X_f^*)=\bigoplus_{i=1}^mH^k(B_i\cap X_f, B_i\cap X_f\setminus\{a_i\})\simeq\bigoplus_{i=1}^m H^{k-1}(K_i),\quad k=n-1,n
$$
where $K_i$ is the link of $X_f$ around $a_i$ ($i=1,\cdots, m$).

For each $i$, $K_i$ has the homotopy type of the unit sphere bundle of tangent bundle of $S^{n-1}$. Indeed, locally around $a_i$, $X_f$ is defined as $z_1^2+\cdots+z_n^2=0$, where $(z_1,\cdots, z_n)$ is the local coordinate system of $\bb{P}^n$ centered at $a_i$. Then $K_i$ can be described as
$$
K_i=\{(z_1,z_2,\cdots,z_n)\in\bb{C}^n\quad :\quad \sum_{j=1}^nz_j^2=0,\ \text{and}\ \sum_{j=1}^n|z_j|^2=\epsilon^2\ \}
$$
where $\epsilon>0$ is small. Let
$$
z_j=\frac{\epsilon}{\sqrt{2}}(v_j+\sqrt{-1}w_j),\quad  j=1,\cdots,n
$$
and
$$
v=(v_1,\cdots, v_n), w=(w_1,\cdots, w_n),
$$
then
$$
K_i=\{(v,w)\in\bb{R}^n\times\bb{R}^n\quad :\quad |v|^2=|w|^2=1\ \text{and}\ \langle v,w\rangle=0\ \},
$$
which is the unit sphere bundle of tangent bundle of $S^{n-1}$.

It follows that
$$
H^{k-1}(K_i)=\bb{C},\quad k=n-1,n.
$$
Note also that $H^{k-1}(K_i)=H^k(B_i\cap X_f,B_i\cap X_f\setminus\{a_i\})=H^k(X_f,X_f\setminus\{a_i\})$ admits a natural mixed Hodge structure. In particular,
$$
1=\dim H^{k-1}(K_i)=\sum_{w\in\bb{N}}\dim Gr^W_wH^{k-1}(K_i)=\sum_{w\in\bb{N}}\sum_{p+q=w}\dim (Gr^W_wH^{k-1}(K_i))^{p,q}
$$
where $Gr^W_wH^{k-1}(K_i)$ is a pure Hodge structure of weight $w$ and
$$
Gr^W_wH^{k-1}(K_i)=\bigoplus_{p+q=w}(Gr^W_wH^{k-1}(K_i))^{p,q}
$$
is the Hodge decomposition. By the Hodge symmetry, we have
$$
(Gr^W_wH^{k-1}(K_i))^{p,q}=\ov{(Gr^W_wH^{k-1}(K_i))^{q,p}}.
$$
It follows that there exists $\rho_{k,i}\in\bb{N}$ such that
$$
Gr^W_wH^{k-1}(K_i)=0,\qquad w\neq 2\rho_k
$$
and
$$
(Gr^W_{2\rho_{k,i}}H^{k-1}(K_i))^{p,q}=0,\qquad p\neq q
$$
and
$$
\dim (Gr^W_{2\rho_{k,i}}H^{k-1}(K_i))^{\rho_{k,i},\rho_{k,i}}=1.
$$
In particular, $H^{k-1}(K_i)$ is pure of type $(\rho_{k,i},\rho_{k,i})$.

Note that the mixed Hodge structure on $H^{k-1}(K_i)$ depends only on the local structure of $X_f$ around $a_i$ (see \cite{Df83}, Theorem 3.4). Since all the $a_i$'s are nodes, $H^{k-1}(K_i)$ is naturally isomorphic to $H^{k-1}(K_j)$ as mixed Hodge structures for any $i,j$, hence there exists $\rho_k\in\bb{N}$ such that
$$
\rho_{k,1}=\rho_{k,2}=\cdots=\rho_{k,m}=\rho_k,
$$
and thus $H^k(X_f, X_f^*)$ is pure of type $(\rho_k,\rho_k)$ for $k=n-1,n$.
\end{proof}

By Proposition (C28) in \cite{D92}, Appendix C (see also \cite{Df83}, Proposition 3.8) , it follows that $2\rho_{n-1}\leq n-2$. Thus,
$$
Gr_F^pH^{n-1}(X_f,X_f^*)=0,\quad p=n-2,n-1.
$$
Moreover, by the discussions above Example 3.18 in \cite{D92}, Chapter 6, $H^{n-1}(K_i)$ has weight $n$, namely, $2\rho_n=n$, and thus for $n\geq 6$
$$
Gr_F^pH^n(X_f,X_f^*)=0,\quad p=n-2,n-1.
$$

Therefore, it follows from \eqref{eq: exact} that we have an isomorphism
$$
i_0^*:\quad Gr_F^{n-1}H^{n-1}_0(X_f)=F^{n-1}H^{n-1}_0(X_f)\xrightarrow{\sim}Gr_F^{n-1}H^{n-1}_0(X_f^*)=F^{n-1}H^{n-1}_0(X_f^*)
$$
for $n\geq 4$. Furthermore, we have isomorphisms
$$
i_0^*:\quad Gr_F^{n-2}H^{n-1}_0(X_f)\xrightarrow{\sim}Gr_F^{n-2}H^{n-1}_0(X_f^*)
$$
and
$$
i_0^*:\quad F^{n-2}H^{n-1}_0(X_f)\xrightarrow{\sim}F^{n-2}H^{n-1}_0(X_f^*)
$$
for $n\geq 6$; but for $n=4$, we only have injections
$$
i_0^*:\quad Gr_F^{n-2}H^{n-1}_0(X_f)\hookrightarrow Gr_F^{n-2}H^{n-1}_0(X_f^*).
$$
and
$$
i_0^*:\quad F^{n-2}H^{n-1}_0(X_f)\hookrightarrow F^{n-2}H^{n-1}_0(X_f^*).
$$

Using the residue isomorphism \eqref{eq: UX}, we denote
$$
F^{n-1}(U_f,X_f)=\ov{R}_f^{-1}\biggl(i_0^*(F^{n-2}H^{n-1}_0(X_f))\biggr)\subseteq F^{n-1}H^n(U_f)
$$
for $n\geq 4$ (and $n$ is even). Then clearly, $F^{n-1}(U_f,X_f)=F^{n-1}H^n(U_f)$ for $n\geq 6$.

We still denote by $\ov{R}_f$ its restriction to $F^{n-1}(U_f,X_f)$. Then
$$
i_0^*:\quad F^{n-2}H^{n-1}_0(X_f)\to \ov{R}_f(F^{n-1}(U_f,X_f))
$$
is an isomorphism and we have an isomorphism
$$
R_f=(i_0^*)^{-1}\circ\ov{R}_f:\quad F^{n-1}(U_f,X_f)\xrightarrow{\sim} F^{n-2}H^{n-1}_0(X_f).
$$

\subsubsection{Conclusion }
In conclusion, no matter whether $n$ is even or odd, we always have isomorphisms
\begin{equation}\label{eq: hodgen-1}
R_f:\quad Gr_F^nH^n(U_f)\xrightarrow{\sim}Gr_F^{n-1}H^{n-1}_0(X_f)
\end{equation}
and
\begin{equation}\label{eq: hodgen-2}
R_f:\quad F^{n-1}(U_f,X_f)/F^nH^n(U_f)\xrightarrow{\sim}Gr_F^{n-2}H^{n-1}_0(X_f)
\end{equation}
where $F^{n-1}(U_f,X_f)=\ov{R}_f^{-1}\biggl(i_0^*(F^{n-2}H^{n-1}_0(X_f))\biggr)$ is a subspace of $F^{n-1}H^n(U_f)$ containing $F^nH^n(U_f)$; and $R_f=(i_0^*)^{-1}\circ\ov{R}_f$.

\subsection{Cohomology of $X_f$}

Denote by
$$
\Omega=\sum_{i=0}^n(-1)^ix_idx_0\wedge\cdots\wedge dx_{i-1}\wedge\widehat{dx_i}\wedge dx_{i+1}\wedge\cdots\wedge dx_n
$$
where $\widehat{(\  )}$ means that the term is omitted. As is shown in \cite{D92}, Chapter 6, any cohomology class in $F^pH^n(U_f)$ can be represented by a form
$$
\omega(h)=\frac{h\Omega}{f^{n-p+1}}
$$
with $h\in S_{(n-p+1)d-n-1}$. Hence, by \eqref{eq: hodgen-1}, we see that any element in $Gr_F^{n-1}H^{n-1}_0(X_f)$ can be represented by
$$
R_f([\frac{h_1\Omega}{f}])
$$
with $h_1\in S_{d-n-1}$ and similarly, by \eqref{eq: hodgen-2}, any element in $Gr_F^{n-2}H^{n-1}_0(X_f)$ can be represented by
$$
R_f([\frac{h_2\Omega}{f^2}])
$$
with $h_2\in S_{2d-n-1}$.

Such results agree with \cite{DSW}, Theorem 2.2, where the following formulae are given
$$
Gr_F^nH^n(U_f)=(S/J(f))_{d-n-1},\quad Gr_F^{n-1}H^n(U_f)=(S/J(f))_{2d-n-1},
$$
for $n>3$ and for $n=3$,
$$
Gr_F^nH^n(U_f)=(S/J(f))_{d-n-1},\quad Gr_F^{n-1}H^n(U_f)=(I(f)/J(f))_{2d-n-1},
$$
where $I(f)$ is the saturation of $J(f)$, which is also equal to the radical of $J(f)$ for a nodal hypersurface (see \cite{D13R}, Remark 2.2).

Putting all the discussions above in this section together, we obtain the following.
\begin{prop}\label{prop: hodge}
Let $X_f: f=0$ be a nodal hypersurface in $\bb{P}^n$ of degree $d\geq n+1$. Then
\begin{enumerate}[\rm(i)]
\item when $n\geq 3$, there is an isomorphism
$$
\Lambda_f:\quad (S/J(f))_{d-n-1}\to Gr_F^{n-1}H^{n-1}_0(X_f),\quad \Lambda_f(h_1)=R_f([\frac{h_1\Omega}{f}]),
$$
\item when $n>4$, there is an isomorphism
$$
\Lambda_f:\quad (S/J(f))_{2d-n-1}\to Gr_F^{n-2}H^{n-1}_0(X_f),\quad \Lambda_f(h_2)=R_f([\frac{h_2\Omega}{f^2}]),
$$
\item when $n=3$, there is an isomorphism
$$
\Lambda_f:\quad (I(f)/J(f))_{2d-n-1}\to Gr_F^{n-2}H^{n-1}_0(X_f),\quad \Lambda_f(h_2)=R_f([\frac{h_2\Omega}{f^2}]),
$$
\item when $n=4$, there is an isomorphism
$$
\Lambda_f:\quad S'/J(f)_{2d-n-1}\to Gr_F^{n-2}H^{n-1}_0(X_f),\quad \Lambda_f(h_2)=R_f([\frac{h_2\Omega}{f^2}]),
$$
where $S'\subseteq S_{2d-n-1}$ is a vector subspace containing $J(f)_{2d-n-1}$ obtained via
$$
S'/J(f)_{2d-n-1}=\omega^{-1}(F^{n-1}(U_f,X_f)/F^nH^n(U_f))
$$
where $\omega$ is the isomorphism
$$
\omega:\quad (S/J(f))_{2d-n-1}\to Gr_F^{n-1}H^n(U_f),\qquad \omega(h_2)=[\frac{h_2\Omega}{f^2}]
$$
established in \cite{DSW}, Theorem 2.2, and $F^{n-1}(U_f,X_f)$ is obtained in \eqref{eq: hodgen-2}.

\end{enumerate}
In all the formulae above, $R_f$ denotes the residue map.
\end{prop}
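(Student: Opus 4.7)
The plan is to compose the two chains of isomorphisms already built up in this section: the residue isomorphisms \eqref{eq: hodgen-1} and \eqref{eq: hodgen-2} relating the Hodge quotients of $H^n(U_f)$ to those of $H^{n-1}_0(X_f)$, with the explicit presentations of $Gr_F^n H^n(U_f)$ and $Gr_F^{n-1} H^n(U_f)$ in terms of graded pieces of $S/J(f)$ (respectively $I(f)/J(f)$ when $n=3$) furnished by \cite{DSW}, Theorem 2.2. Once this setup is in place, each part of the proposition is a formal consequence.

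For part (i), I would take the isomorphism $\omega: (S/J(f))_{d-n-1} \to Gr_F^n H^n(U_f)$ sending $h_1 \mapsto [\frac{h_1\Omega}{f}]$ and compose it with \eqref{eq: hodgen-1} to obtain $\Lambda_f$. For part (ii) with $n \geq 5$, I would split into sub-cases by parity: if $n$ is odd then $R_f$ is already an isomorphism on all of $Gr_F^{n-1} H^n(U_f)$ by \eqref{hodge1}; if $n \geq 6$ is even then $F^{n-1}(U_f, X_f) = F^{n-1} H^n(U_f)$, so the quotient in \eqref{eq: hodgen-2} coincides with $Gr_F^{n-1} H^n(U_f)$. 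Either way, $Gr_F^{n-1} H^n(U_f)$ is identified with $(S/J(f))_{2d-n-1}$ via $\omega$, and the composition with $R_f$ yields $\Lambda_f$. Part (iii) is analogous: since $n=3$ is odd, $R_f$ is an isomorphism on $Gr_F^{n-1} H^n(U_f)$, which in turn is identified by \cite{DSW} with $(I(f)/J(f))_{2d-n-1}$ rather than $(S/J(f))_{2d-n-1}$.

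For part (iv) with $n=4$, the delicate point is that $i_0^*$ is only injective (not surjective) on $F^{n-2}$, so $R_f$ is initially defined only on the proper subspace $F^{n-1}(U_f, X_f) \subseteq F^{n-1} H^n(U_f)$ constructed earlier in the section. Since the isomorphism $\omega: (S/J(f))_{2d-n-1} \to Gr_F^{n-1} H^n(U_f)$ from \cite{DSW} remains valid for $n=4$, I would define $S'/J(f)_{2d-n-1}$ to be the $\omega$-preimage of $F^{n-1}(U_f,X_f)/F^n H^n(U_f)$. By construction $\omega$ restricts to an isomorphism between this subspace and $F^{n-1}(U_f,X_f)/F^n H^n(U_f)$, and composing with the restricted $R_f$ from \eqref{eq: hodgen-2} produces $\Lambda_f$.

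The main obstacle is already absorbed into the body of Section \ref{sec: hodge}: the parity-by-parity analysis of $H^{n-1}(X_f,X_f^*)$ and $H^n(X_f,X_f^*)$ via the link computation and the residue isomorphism \eqref{eq: UX}, together with the low-weight vanishing on these relative groups, is what makes \eqref{eq: hodgen-1} and \eqref{eq: hodgen-2} available uniformly across the parity split. For the proposition itself the remaining work is essentially bookkeeping: verifying that the composite maps $\Lambda_f(h) = R_f([\frac{h\Omega}{f^k}])$ descend to the quotients indicated (automatic since both $\omega$ and $R_f$ respect all filtrations at play) and isolating the subspace $S'$ in the borderline case $n=4$.
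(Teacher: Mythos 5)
Your proposal is correct and follows essentially the same route as the paper: the paper gives no separate proof of Proposition \ref{prop: hodge} beyond the phrase ``putting all the discussions above in this section together,'' meaning precisely the composition of the residue isomorphisms \eqref{eq: hodgen-1} and \eqref{eq: hodgen-2} (established via the parity-dependent analysis of $i_0^*$) with the identifications of $Gr_F^nH^n(U_f)$ and $Gr_F^{n-1}H^n(U_f)$ from \cite{DSW}, Theorem 2.2, including the same handling of the exceptional cases $n=3$ (where $I(f)/J(f)$ replaces $S/J(f)$) and $n=4$ (where $S'$ is carved out as the $\omega$-preimage of $F^{n-1}(U_f,X_f)/F^nH^n(U_f)$).
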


As a corollary, we have the following.

\begin{cor}\label{cor: dimension}
Let $X_f: f=0$ be a nodal hypersurface in $\bb{P}^n$ of degree $d\geq n+1$. Then
\begin{enumerate}[\rm(i)]
\item if $n\geq 3$, the dimension
$$
\dim F^{n-1}H^{n-1}_0(X_f)
$$
depends only on $n,d$.
\item if $n\geq 3$ is odd or $n\geq 6$ is even, the dimension
$$
\quad \dim F^{n-2}H^{n-1}_0(X_f)
$$
depends only on $n,d$ and possibly the number of nodes in $X_f$.
\end{enumerate}
\end{cor}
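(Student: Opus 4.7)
The plan is to combine Proposition~\ref{prop: hodge} with Lemma~\ref{lem: dimension}. The starting observation is that $X_f$ is an $(n-1)$-dimensional compact algebraic variety, so the mixed Hodge structure on $H^{n-1}_0(X_f)$ has weights at most $n-1$; in particular $F^nH^{n-1}_0(X_f)=0$. Consequently
\[
F^{n-1}H^{n-1}_0(X_f)=Gr_F^{n-1}H^{n-1}_0(X_f),\qquad \dim F^{n-2}H^{n-1}_0(X_f)=\dim Gr_F^{n-1}H^{n-1}_0(X_f)+\dim Gr_F^{n-2}H^{n-1}_0(X_f),
\]
so it is enough to control the dimensions of the two graded pieces on the right.

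For part (i), I would invoke Proposition~\ref{prop: hodge}(i) to identify $Gr_F^{n-1}H^{n-1}_0(X_f)$ with $(S/J(f))_{d-n-1}$, and then Lemma~\ref{lem: dimension} to conclude $\dim (S/J(f))_{d-n-1}=\dim (S/J(f_s))_{d-n-1}$, a number depending on $n,d$ alone via the standard formulas for smooth hypersurfaces.

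For part (ii) I would compute $\dim Gr_F^{n-2}H^{n-1}_0(X_f)$ in a case split matching Proposition~\ref{prop: hodge}. When $n\geq 5$ is odd or $n\geq 6$ is even, Proposition~\ref{prop: hodge}(ii) gives an isomorphism with $(S/J(f))_{2d-n-1}$, whose dimension by Lemma~\ref{lem: dimension} depends only on $n,d$; combined with the contribution already computed in part (i) this settles these cases. The remaining case $n=3$ is the only genuine subtlety: here Proposition~\ref{prop: hodge}(iii) yields $Gr_F^{n-2}H^{n-1}_0(X_f)\cong (I(f)/J(f))_{2d-n-1}$ with $I(f)$ the saturation of $J(f)$. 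Since $I(f)$ is determined by the singular scheme of $X_f$, that is, by the collection of nodes, this dimension is an equisingular invariant and can vary with the node data; the qualification \emph{``possibly the number of nodes''} in the statement is there precisely to absorb this residual dependence, and it is the only point in the proof that does not follow by direct quotation of the preceding results.
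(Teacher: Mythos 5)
Your reduction to the graded pieces, your part (i), and your treatment of part (ii) for $n\geq 5$ odd and $n\geq 6$ even all match the paper: Proposition \ref{prop: hodge} (i)--(ii) plus Lemma \ref{lem: dimension} give dimensions depending only on $n,d$. The problem is the case $n=3$, where your argument has a genuine gap. You identify $Gr_F^{n-2}H^{n-1}_0(X_f)$ with $(I(f)/J(f))_{2d-n-1}$ via Proposition \ref{prop: hodge}(iii), and then assert that since $I(f)$ is determined by ``the collection of nodes,'' the dimension is ``an equisingular invariant'' absorbed by the phrase ``possibly the number of nodes.'' That is not what the corollary claims. Depending on the configuration of the nodes is strictly weaker than depending only on their \emph{number}: a priori $\dim (S/I(f))_{2d-4}$ is the number of conditions the nodes impose on forms of degree $2d-4$, and this could vary with the positions of the nodes even when their number is fixed (this is exactly the phenomenon measured by the defect of the linear system). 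So as written you have not proved statement (ii) for $n=3$; you have only proved that the dimension is determined by the singular scheme, and then relabeled the remaining dependence.

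The paper closes this case by a different route: for $n$ odd, $X_f$ is a $\bb{Q}$-homology manifold, and it invokes \cite{D96} for the fact that the Hodge numbers of such a nodal hypersurface depend only on $n$, $d$ and the number of nodes. If you want to stay within your algebraic framework, you would instead need to show that the nodes impose independent conditions on forms of degree $2d-4$, i.e.\ that the relevant defect vanishes, so that $\dim (S/I(f))_{2d-4}$ equals the number of nodes and hence $\dim (I(f)/J(f))_{2d-4}=\dim (S/J(f))_{2d-4}-\#\{\text{nodes}\}$ depends only on $n,d$ and that number; this vanishing can be extracted from Lemma \ref{lem: syzygies} via the defect--syzygy correspondence of \cite{D13R}, but it is an additional step that your proposal omits.
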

\begin{proof}
Note that
$$
\dim F^{n-1}H^{n-1}_0(X_f)=\dim Gr_F^{n-1}H^{n-1}_0(X_f)
$$
and
$$
\dim F^{n-2}H^{n-1}_0(X_f)=\dim Gr_F^{n-1}H^{n-1}_0(X_f)+\dim Gr_F^{n-2}H^{n-1}_0(X_f).
$$

If $n>4$, the results follow from Proposition \ref{prop: hodge} and Lemma \ref{lem: dimension}, and the dimensions depend only on $n,d$.
When $n=3$, $X_f$ is a $\bb{Q}$-homology manifold and the Hodge numbers of $X_f$ depend only on $n,d$ and the number of nodes in $X_f$, see also \cite{D96}.
\end{proof}

\section{Variations of mixed Hodge structures}\label{sec: var}

Let $X_f: f=0$ be a nodal hypersurface in $\bb{P}^n$ of degree $d\geq n+1$. When $n$ is odd, assume $n\geq 3$ while when $n$ is even, assume $n\geq 6$.

\subsection{Topological triviality}
Recall that $\mathfrak{B}_f\subseteq S_d$ parameterizes all nodal hypersurfaces with the same number of nodes as $X_f$. Let $\mathcal{U}\subseteq\mathfrak{B}_f$ be a contractible smooth subvariety containing $f$ such that it gives an effective deformation for $X_f$. Set
$$
\mathfrak{X}_{\mathcal{U}}=\{(x,g)\in\bb{P}^n\times\mathcal{U}\quad:\quad x\in X_g\quad \}
$$
which can be seen as the union of all nodal hypersurfaces parameterized by $\mathcal{U}$.

Then by the First Thom Isotopy Lemma (see \cite{D92}, Chapter 1, Section 3), there is a homeomorphism $\Phi$ satisfying the following commutative diagram
$$
\xymatrix{
(\bb{P}^n\times\mathcal{U},\mathfrak{X}_{\mathcal{U}})\ar[rr]^{\Phi}\ar[rd]_{p_1} & & (\bb{P}^n, X_f)\times\mathcal{U}\ar[ld]^{p_2}\\
& \mathcal{U} &
}
$$
where $p_1,p_2$ are natural projections. In fact, $\Phi$ can be obtained by integrating some well-controlled stratified vector field; for a proof, see \cite{Ma}. From now on, we fix such a homeomorphism.

In particular, for any $g\in\mathcal{U}$, there is a canonical homeomorphism $\Phi_g:\bb{P}^n\to\bb{P}^n$, which induces homeomorphisms $\Phi_{g,X}: X_f\to X_g$ and $\Phi_{g,U}: U_f\to U_g$ with $\Phi_f=\text{\rm Id}$.

Moreover, we have an induced isomorphism of groups
$$
\Phi_{g,X}^*:\quad H^{n-1}_0(X_g)\xrightarrow{\sim}H^{n-1}_0(X_f).
$$
Hence $\dim H^{n-1}_0(X_g)$ is constant for $g\in\mathcal{U}$.

In addition, by Corollary \ref{cor: dimension}, under our assumption on $n$, the dimensions
$$
\dim F^{n-1}H^{n-1}_0(X_g), \quad \dim F^{n-2}H^{n-1}_0(X_g)
$$
are constant with respect to $g\in\mathcal{U}$.
Via the identification $\Phi_{g, X}^*: H^{n-1}_0(X_g)\xrightarrow{\sim} H^{n-1}_0(X_f)$, it follows that $(F^{n-1}H^{n-1}_0(X_g), F^{n-2}H^{n-1}_0(X_g))$ can be identified with $(\Phi_{g,X}^*F^{n-1}H^{n-1}_0(X_g), \Phi_{g,X}^*F^{n-2}H^{n-1}_0(X_g))$, which are two subspaces of $H^{n-1}_0(X_f)$ of fixed dimension. Therefore, we have the well-defined map as in \eqref{eq: mP}
\begin{equation*}
\mathcal{P}:\qquad \mathcal{U}\ni g\mapsto (\Phi_{g,X}^*F^{n-1}H^{n-1}_0(X_g), \Phi_{g,X}^*F^{n-2}H^{n-1}_0(X_g))\in\mathcal{F}
\end{equation*}
where $\mathcal{F}$ is the following flag manifold
\begin{eqnarray*}
\mathcal{F}&=&\{(E_1,E_2)\quad:\quad E_1\subseteq E_2\text{ are vector subspaces of }H^{n-1}_0(X_f)\text{ and }\\
& &\dim E_1=\dim F^{n-1}H^{n-1}_0(X_f)\text{ and }\dim E_2=\dim F^{n-2}H^{n-1}_0(X_f)\}.
\end{eqnarray*}
When $n$ is odd, all the Hodge numbers of $X_g$ are constant for $g\in\mathcal{U}$, and $\mathcal{P}$ is just two components of the period map in the theory of variation of Hodge structures, see \cite{VO1}, Part III, Chapter 10.

\subsection{Infinitesimal deformation}
Now we consider the differential of $\mathcal{P}$. Note that a component of $d\mathcal{P}_f$ is the map
$$
d\mathcal{P}_f:\quad T_f\mathcal{U}\to \Hom(F^{n-1}H^{n-1}_0(X_f), H^{n-1}_0(X_f)/F^{n-1}H^{n-1}_0(X_f));
$$
for the properties of tangent spaces of flag manifolds, we refer to \cite{VO1}, Part III, Chapter 10 and for analogous treatments for smooth hypersurfaces, see \cite{VO2}, Part II, Chapter 6. Recall that Proposition \ref{prop: hodge} implies that any element in  $F^{n-1}H^{n-1}_0(X_f)$ is of the form
$$
\omega(h_1)=R_f([\frac{h_1\Omega}{f}]).
$$
The following holds.
\begin{lem}\label{lem: dp}
For $h\in T_f\mathcal{U}\subseteq S_d$, we have
$$
d\mathcal{P}_f(h)(\omega(h_1))=d\mathcal{P}_f(h)\biggl(R_f([\frac{h_1\Omega}{f}])\biggr)=R_f([-\frac{hh_1\Omega}{f^2}]) \Mod F^{n-1}H^{n-1}_0(X_f)
$$
\end{lem}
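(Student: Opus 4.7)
The plan is to compute the differential along a test arc. Choose a smooth arc $f_t$ in $\mathcal{U}$ with $f_0=f$ and $\dot f_0=h$. By Proposition \ref{prop: hodge}(i), the class $\omega(h_1)=R_f([h_1\Omega/f])$ lies in $F^{n-1}H^{n-1}_0(X_f)$. I would extend it to a moving section
$$\tilde\omega_t := \Phi_{f_t,X}^*\bigl(R_{f_t}([h_1\Omega/f_t])\bigr)\in\Phi_{f_t,X}^*F^{n-1}H^{n-1}_0(X_{f_t})\subseteq H^{n-1}_0(X_f),$$
so that $\tilde\omega_0=\omega(h_1)$. By the standard description of the differential of a map into a flag manifold along a section of the defining subbundle, $d\mathcal{P}_f(h)(\omega(h_1))$ is represented by $\frac{d}{dt}|_{t=0}\tilde\omega_t$ modulo $F^{n-1}H^{n-1}_0(X_f)$; the task is therefore to identify this derivative.

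To compute it I would transfer the problem to the open complement via the residue isomorphism \eqref{eq: UX}. The residue is natural with respect to morphisms of pairs, and $\Phi$ restricts to homeomorphisms of the complements $U_{f_t}\to U_f$ as well as of the smooth loci $X_{f_t}^*\to X_f^*$; combined with the evident compatibility of the inclusion-induced maps $i_0^*$ under $\Phi$, this gives $\tilde\omega_t=R_f(\tilde\eta_t)$ where $\tilde\eta_t:=\Phi_{f_t,U}^*[h_1\Omega/f_t]\in H^n(U_f)$. Since $\mathcal{U}$ is contractible, the topological identification $\Phi_{f_t,U}^*$ coincides with Gauss--Manin parallel transport on $H^n(U_{f_t})$ back to $H^n(U_f)$, so
$$\frac{d}{dt}\Big|_{t=0}\tilde\eta_t=\nabla_{\partial_t}\Big|_{t=0}\Bigl[\frac{h_1\Omega}{f_t}\Bigr].$$

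The heart of the argument is then the classical Griffiths computation. The family $[h_1\Omega/f_t]$ is a section of $F^nH^n(U_{f_t})$, and modulo $F^nH^n$ the Gauss--Manin covariant derivative is represented by the naive partial derivative of the meromorphic form:
$$\nabla_{\partial_t}\Big|_{t=0}\Bigl[\frac{h_1\Omega}{f_t}\Bigr]\equiv\Bigl[\frac{\partial}{\partial t}\Big|_{t=0}\frac{h_1\Omega}{f_t}\Bigr]=\Bigl[-\frac{hh_1\Omega}{f^2}\Bigr]\pmod{F^nH^n(U_f)}.$$
Because $R_f$ strictly shifts the Hodge filtration by one (it is an isomorphism of type $(-1,-1)$), applying $R_f$ turns this congruence modulo $F^nH^n(U_f)$ into a congruence modulo $F^{n-1}H^{n-1}_0(X_f)$, which is exactly the content of the lemma.

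The main obstacle is establishing rigorously the two compatibility facts used above: that the topological trivialization $\Phi$ induces Gauss--Manin parallel transport on the cohomologies of $X_{f_t}$ and $U_{f_t}$ in the nodal setting, and that the residue map is compatible with this trivialization. Both come down to the fact that Gauss--Manin is the unique flat connection whose horizontal sections are locally constant, together with the topological triviality of the family established via the Thom Isotopy Lemma in Section \ref{sec: var}, and to the fact that the residue is built from local topological data of the pair $(\bb{P}^n, X)$. Once these compatibilities are in hand, the computation reduces to the explicit Griffiths derivative displayed above.
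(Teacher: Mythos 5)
Your proposal is correct and follows essentially the same route as the paper: choose an arc through $f$, identify $d\mathcal{P}_f(h)(\omega(h_1))$ with the $t$-derivative of the transported class modulo $F^{n-1}$, pass to $H^n(U_{f_t})$ via the residue isomorphism, and differentiate $[h_1\Omega/f_t]$ to get $[-hh_1\Omega/f^2]$. The only difference is that where you invoke Gauss--Manin parallel transport and the classical Griffiths computation, the paper justifies the derivative directly by restricting all the forms to a common deformation retract $U_{\mathcal{N}}$ of the complements $U_{f_t}$ for $t$ small (so that $\Phi_{f_t,U}^*$ becomes $(\tau_f^*)^{-1}\circ\tau_{f_t}^*$ with $\tau_{f_t}^*$ an honest restriction of forms, and the derivative can be taken naively on the fixed space $U_{\mathcal{N}}$), which is precisely the compatibility you flag as the remaining obstacle.
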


Its proof is a little lengthy and we postpone it to the end of this section; instead, we first derive Theorem \ref{main cor} from Lemma \ref{lem: dp}.

\subsection{Proof of Theorem \ref{main cor} }
From Lemma \ref{lem: dp} and Proposition \ref{prop: hodge}, the image of $d\mathcal{P}_f$ is contained in
\begin{align*}
&\Hom(F^{n-1}H^{n-1}_0(X_f), F^{n-2}H^{n-1}_0(X_f)/F^{n-1}H^{n-1}_0(X_f))\\
=&\Hom(Gr_F^{n-1}H^{n-1}_0(X_f), Gr_F^{n-2}H^{n-1}_0(X_f)).
\end{align*}
Moreover, we get the following commutative diagram
\begin{equation}\label{eq: square}
\xymatrix{
T_f\mathcal{U}\ar[r]^--{d\mathcal{P}_f}\ar[d]^{i_1}  & \Hom(Gr_F^{n-1}H^{n-1}_0(X_f), Gr_F^{n-2}H^{n-1}_0(X_f))\ar[d]^{i_2}\\
(S/J(f))_d\ar[r]^--{\varphi} & \Hom((S/J(f))_{d-n-1}, (S/J(f))_{2d-n-1})
}
\end{equation}
where $\varphi$ is given in \eqref{eq: phi}. $i_1$ is the composite $T_f\mathcal{U}\subseteq S_d\to S_d/J(f)_d$, which is injective since $\mathcal{U}$ is an effective deformation. $i_2$ is defined as follows: for $\eta\in\Hom(Gr_F^{n-1}H^{n-1}_0(X_f), Gr_F^{n-2}H^{n-1}_0(X_f))$ and $h_1\in(S/J(f))_{d-n-1}$,
$$
i_2(\eta)(h_1)=-\Lambda_f^{-1}\biggl(\eta\bigl(\Lambda_f(h_1)\bigr)\biggr),
$$
where $\Lambda_f$ is the isomorphism given in Proposition \ref{prop: hodge}.

By Theorem \ref{main thm}, $\varphi$ is injective, hence $\varphi\circ i_1$ is injective. Thus it follows from \eqref{eq: square} that $d\mathcal{P}_f$ is injective, hence Theorem \ref{main cor} follows.

\begin{rk}
The result is probably also true for $n=4$. We exclude this case because we do not know whether the dimension $\dim F^{n-2}H^{n-1}_0(X_g)$ or equivalently $\dim Gr_F^{n-2}H^{n-1}_0(X_g)$ is constant for $g\in\mathcal{U}$ in this case.
\end{rk}

\subsection{Proof of Lemma \ref{lem: dp}} The proof is almost the same as that in \cite{VO2}, Part II, Chapter 6 where variations of smooth hypersurfaces are considered. However, to avoid any possible confusion, we give the details here.

From the topological triviality of the family $X_g, g\in\mathcal{U}$, it follows that there exists a small contractible neighbourhood $\mathcal{N}\ni f$ in $\mathcal{U}$, such that for any $g\in \mathcal{N}$, $X_g$ is a deformation retract of
$$
\mathcal{X}_{\mathcal{N}}:=\bigcup_{g\in\mathcal{N}}X_g\subseteq\bb{P}^n.
$$
Set
$$
U_{\mathcal{N}}=\bb{P}^n\setminus\ X_{\mathcal{N}}.
$$
Then $U_{\mathcal{N}}$ is a deformation retract of $U_g$ for every $g\in\mathcal{N}$. Let for $g\in\mathcal{N}$
$$
\tau_g:\quad U_{\mathcal{N}}\hookrightarrow U_g
$$
be the natural inclusion, then the induced homomorphism in cohomology
$$
\tau_g^*:\quad H^n(U_g)\to H^n(U_{\mathcal{N}})
$$
is an isomorphism.

The differential $d\mathcal{P}_f$ can be computed as follows:
for any $h\in T_f\mathcal{U}\subseteq S_d$, choose a curve $g(t): (-\epsilon,\epsilon)\to\mathcal{N}\subseteq\mathcal{U}$ such that $g(0)=f$ and $\frac{d g}{dt}(0)=h$. For any element in $F^{n-1}H^{n-1}_0(X_f)$ of the form
$$
\omega(h_1)=R_f([\frac{h_1\Omega}{f}]),
$$
let
$$
\omega_t(h_1)=R_{g(t)}([\frac{h_1\Omega}{g(t)}])
$$
give an element of $F^{n-1}H^{n-1}_0(X_{g(t)})$. Then
$$
d\mathcal{P}_f(h)(\omega(h_1))=\frac{d}{d t}\biggl|_{t=0}\Phi_{g(t),X}^*(\omega_t(h_1))\quad\Mod F^{n-1}H^{n-1}_0(X_f).
$$
We have
\begin{eqnarray*}
\frac{d}{d t}\biggl|_{t=0} \Phi_{g(t),X}^*(\omega_t(h_1))&=&\frac{d}{dt}\biggl|_{t=0}R_f\biggl(\Phi_{g(t),U}^*([\frac{h_1\Omega}{g(t)}])\biggr)\\
  &=&R_f\biggl(\frac{d}{dt}\biggl|_{t=0}\Phi_{g(t),U}^*([\frac{h_1\Omega}{g(t)}])\biggr),
\end{eqnarray*}
where $\Phi_{g,U}^*$ is the homomorphism induced by the map $\Phi_{g,U}: U_f\to U_g$. Note that $\Phi_{g(t),U}^*: H^n(U_{g(t)})\to H^n(U_f)$ is equal to the composition
$$
H^{n}(U_{g(t)})\xrightarrow{\tau_{g(t)}^*}H^n(U_{\mathcal{N}})\xrightarrow{(\tau_f^*)^{-1}} H^n(U_f).
$$
Hence,
\begin{eqnarray*}
\frac{d}{d t}\biggl|_{t=0} \Phi_{g(t),X}^*(\omega_t(h_1))&=&R_f\biggl(\frac{d}{dt}\biggl|_{t=0}(\tau_{f}^*)^{-1}\tau_{g(t)}^*[\frac{h_1\Omega}{g(t)}]\biggr)\\
          &=&R_f\biggl((\tau_{f}^*)^{-1}\frac{d}{dt}\biggl|_{t=0}[\tau_{g(t)}^*\frac{h_1\Omega}{g(t)}]\biggr).\\
\end{eqnarray*}
Note that $\tau_{g(t)}^*$ acting on forms is a restriction map, it follows that
\begin{eqnarray*}
\frac{d}{dt}\biggl|_{t=0}[\tau_{g(t)}^*\frac{h_1\Omega}{g(t)}]&=&\frac{d}{dt}\biggl|_{t=0}[\frac{h_1\Omega}{g(t)}\biggl|_{U_\mathcal{N}}]\\
            &=&[\frac{d}{dt}\biggl|_{t=0}\frac{h_1\Omega}{g(t)}\biggl|_{U_\mathcal{N}}]\\
            &=&[-\frac{hh_1\Omega}{f^2}\biggl|_{U_\mathcal{N}}].
\end{eqnarray*}
Therefore,
\begin{eqnarray*}
\frac{d}{d t}\biggl|_{t=0}\Phi_{g(t),X}^*(\omega_t(h_1))
          &=&R_f\biggl((\tau_{f}^*)^{-1}[-\frac{hh_1\Omega}{f^2}\biggl|_{U_\mathcal{N}}]\biggr)\\
          &=&R_f([-\frac{hh_1\Omega}{f^2}]).\\
\end{eqnarray*}
Now the proof of Lemma \ref{lem: dp} is complete.

\begin{rk}
To prove Theorem \ref{main cor}, it is essential for us to obtain a diagram like \eqref{eq: square}. In fact, when Y. Zhao \cite{Zhao} proves the infinitesimal Torelli theorem for nodal surfaces, he uses such a diagram implicitly; however, he does not give any proofs. We believe that a detailed proof is indeed needed and this is a special reason why our discussions above always include the case $n=3$.
\end{rk}

\end{document}